\numberwithin{equation}{section}
\theoremstyle{plain}
\newtheorem{theorem}{Theorem}[section]
\newtheorem{corollary}[theorem]{Corollary}
\newtheorem{lemma}[theorem]{Lemma}
\newtheorem*{theorem*}{Theorem}
\newtheorem*{proposition*}{Proposition}
\newtheorem*{corollary*}{Corollary}
\newtheorem*{lemma*}{Lemma}
\newtheorem*{conjecture*}{Conjecture}
\theoremstyle{definition}
\newtheorem{definition}[theorem]{Definition}
\newtheorem*{definition*}{Definition}
\newtheorem*{example*}{Example}
\newtheorem*{question*}{Question}
\newtheorem*{philosophy*}{Philosophy}
\theoremstyle{remark}
\newtheorem{remark}[theorem]{Remark}
\newtheorem*{remark*}{Remark}
\newcommand\lp{\left(}
\newcommand\rp{\right)}
\newcommand\A{\mathcal{A}}
\newcommand\gal{\mathrm{Gal}}
\newcommand\per{per}
\newcommand{\spe}{\mathrm{Spec}}
\newcommand\du{\vee}
\newcommand\Fp{\Z/p\Z}
\newcommand\Z{\mathbb{Z}}          \newcommand\Q{\mathbb{Q}}                    \newcommand\C{\mathbb{C}}          
\renewcommand\P{\mathbb{P}}                                        
\newcommand\Qb{\overline{\Q}}		
\newcommand\cP{\mathcal{P}}
\newcommand\cO{\mathcal{O}}
\renewcommand\hom{\operatorname{Hom}}
\def\Frob{Frobenian}
\def\P{\cP^0_{\!\A}}
\newcommand\rec{recurrent}
\newcommand\surj{\twoheadrightarrow}
\newcommand\iso{\xrightarrow{\sim}}
\newcommand\ack{\section*{Acknowledgements}}
\begin{document}
\title{A finite analogue of the ring of algebraic numbers}
\author{Julian Rosen}
\email{julianrosen@gmail.com}
\date{\today}
\keywords{Finite periods, Frobenius automorphism, linear recurrence, congruence}
\maketitle

%%%%%%%%%%%%
%
%		Abstract
%
%%%%%%%%%%%%
\begin{abstract}
We construct an analogue of the ring of algebraic numbers, living in a quotient of the product of all finite fields of prime order. We use this ring to deduce some results about linear recurrent sequences.
\end{abstract}

%%%%%%%%%%%%
%
%		Introduction
%
%%%%%%%%%%%%
\section{Introduction}
A period is a complex number given as the integral of an algebraic function over a region defined by algebraic inequalities. The set of all periods is a countable subring of $\C$ containing $\Qb$ (see \cite{Kon01} for an overview of periods). Several recent works (e.g.\ \cite{Jar16d,Kan16,Kan13,Ros18a,Sak19,Zha16}) consider ``finite'' analogues of certain periods (finite multiple zeta values, finite multiple polylogarithms, etc.) living in the ring
\[
\A:=\frac{\prod_p\Fp}{\bigoplus_p\Fp},
\]
which was introduced by Konstsevich (\cite{Kon09}, \S2.2). An element of $\A$ is a prime-indexed sequence $(a_p)_p$, with $a_p\in\Fp$, and two sequences are equal if they agree for all sufficiently large $p$. Every non-zero integer is invertible modulo $p$ for all sufficiently large $p$, so there is a diagonal embedding $\Q\hookrightarrow\A$.

\subsection{Results}
The purpose of this paper is to define a countable $\Q$-subalgebra $\P\subset\A$ that is a finite analogue of $\Qb\subset\C$. This algebra is \emph{not} the integral closure of $\Q$ inside $\A$, which has continuum cardinality.

Our first main result is three equivalent characterizations of $\P$.

\begin{theorem}
\label{th1}
The following subsets of $\A$ are equal.
\begin{enumerate}
\item The set of elements $(a_p\mod p)_p$, where $a_0,a_1,a_2,\ldots\in\Q$ is a recurrent sequence (that is, a sequence satisfying a linear recurrence relation with constant coefficients).
\item The set of elements $\big(g(\phi_p)\mod p\big)_p$, where $L/\Q$ is a finite Galois extension, $g:\gal(L/\Q)\to L$ satisfies $g(\sigma\tau\sigma^{-1})=\sigma(g(\tau))$, and $\phi_p$ is the\footnote{This is independent of the representative of the Frobenius conjugacy class (see \S2).} Frobenius at $p$.

\item The set of $\Q$-linear combinations of matrix coefficients for the $\A$-valued Frobenius automorphism
\[
F_{\!\A}:L\otimes\A\to L\otimes\A,
\]
defined by Definition \ref{deffa}, as $L$ ranges over all number fields.
\end{enumerate}
\end{theorem}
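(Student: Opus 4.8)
The plan is to establish the cyclic chain of inclusions $(1)\subseteq(2)\subseteq(3)\subseteq(1)$. Since membership in $\A$ depends only on cofinitely many primes, I will work throughout ``for $p$ sufficiently large'', so that $p$ may be assumed unramified in whatever number field $L$ is in play and all of the finitely many algebraic numbers occurring in a given argument are $p$-integral. Two elementary facts will be used repeatedly: (i) if $\fp\mid p$ in $L$ then $x^p\equiv\phi_\fp(x)\pmod\fp$ for every $\fp$-integral $x\in L$; and (ii) an element of $L$ fixed by the automorphism $\phi_\fp$ has image in the prime field $\F_p\subseteq\mathcal{O}_L/\fp$. A function $g$ as in (2) satisfies $g(\phi_\fp)=\phi_\fp(g(\phi_\fp))$ by taking $\sigma=\tau=\phi_\fp$ in its defining symmetry, so (ii) shows that $(g(\phi_p)\bmod p)_p$ is a well-defined element of $\A$, independent of the choice of $\fp\mid p$ (this recovers the footnoted independence of the Frobenius representative). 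Applying (i) to the identity $\mathrm{Tr}_{L/\Q}(\gamma\mu^p)=\sum_\sigma\sigma(\gamma)\sigma(\mu)^p$ gives, for $p$ large and $\fp\mid p$,
\begin{equation}\label{eq:bridge}
\mathrm{Tr}_{L/\Q}(\gamma\mu^p)\equiv\sum_{\sigma\in\gal(L/\Q)}\sigma(\gamma)\,(\phi_\fp\sigma)(\mu)\pmod\fp,
\end{equation}
which is the bridge, used in both directions, between matrix coefficients of $F_{\!\A}$ and values of the functions $g$.

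For $(1)\subseteq(2)$, let $(a_n)$ be a $\Q$-valued recurrent sequence. By the structure theory of linear recurrences, for $n$ large one has $a_n=\sum_j P_j(n)\beta_j^{\,n}$ with the $\beta_j\in\Qb^\times$ distinct and $P_j\in\Qb[n]$ nonzero, and this representation is \emph{unique} because the sequences $n\mapsto n^k\beta^n$ for distinct pairs $(\beta,k)$ are linearly independent. Consequently, taking $L$ to be a Galois closure of $\Q(\{\beta_j\}\cup\{\text{coefficients of the }P_j\})$ with group $G=\gal(L/\Q)$, each $\rho\in G$ permutes the pairs $(P_j,\beta_j)$. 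Reducing $a_p=\sum_j P_j(p)\beta_j^{\,p}$ modulo a prime $\fp\mid p$ of $L$ and using $P_j(p)\equiv P_j(0)\pmod\fp$ together with (i) gives $a_p\equiv\sum_j P_j(0)\,\phi_\fp(\beta_j)\pmod\fp$. So I would set $g\colon G\to L$, $g(\tau)=\sum_j P_j(0)\,\tau(\beta_j)$; the permutation property forces $g(\rho\tau\rho^{-1})=\rho(g(\tau))$, and the congruence above shows $(a_p\bmod p)_p=(g(\phi_p)\bmod p)_p$, which lies in the set (2).

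The inclusion $(2)\subseteq(3)$ is the heart of the matter, and I would begin by reinterpreting the functions $g$. Under the standard isomorphism $L\otimes_\Q L\iso\prod_{\sigma\in G}L$ sending $\gamma\otimes\beta\mapsto(\gamma\,\sigma(\beta))_\sigma$, the diagonal $G$-action on the left becomes $\rho\cdot(z_\sigma)_\sigma=(\rho(z_{\rho^{-1}\sigma\rho}))_\sigma$ on the right, so that an element of $(L\otimes_\Q L)^G$ is exactly a function $z\colon G\to L$ satisfying $z(\rho\sigma\rho^{-1})=\rho(z(\sigma))$ --- precisely a function $g$ as in (2). Next one checks that the $\Q$-bilinear map $\Phi\colon L\otimes_\Q L\to\mathrm{Maps}(G,L)$ defined by $\Phi(\gamma\otimes\beta)(\tau)=\sum_{\sigma\in G}\sigma(\gamma)\,(\tau\sigma)(\beta)$ satisfies $\Phi(x)(\tau)=\sum_\sigma\sigma(z_{\sigma^{-1}\tau\sigma})$ whenever $x\leftrightarrow(z_\sigma)_\sigma$; hence $\Phi(x)(\tau)=|G|\,z_\tau$ for $x\in(L\otimes_\Q L)^G$. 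Therefore every function $g$ as in (2) equals $\tfrac1{|G|}\Phi(x)$ for a suitable $x=\sum_i\gamma_i\otimes\beta_i\in L\otimes_\Q L$, and evaluating at $\phi_\fp$ and applying \eqref{eq:bridge} to each term yields $g(\phi_p)\equiv\tfrac1{|G|}\sum_i\mathrm{Tr}_{L/\Q}(\gamma_i\beta_i^{\,p})\pmod\fp$ for $p$ large. Finally, unwinding Definition \ref{deffa}: for $p$ unramified in $L$, $F_{\!\A}$ acts on the $p$-th component $\mathcal{O}_L/p\mathcal{O}_L\cong\prod_{\fp\mid p}\mathcal{O}_L/\fp$ as the $p$-power map on each factor, equivalently $F_{\!\A}(\mu\otimes1)$ has $p$-component the class of $\mu^p$. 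Thus $p\mapsto\mathrm{Tr}_{L/\Q}(\gamma_i\beta_i^{\,p})\bmod p$ is the matrix coefficient obtained by applying the $\Q$-linear functional $x\mapsto\mathrm{Tr}_{L/\Q}(\gamma_i x)$ to $F_{\!\A}(\beta_i\otimes1)$, so $(g(\phi_p)\bmod p)_p$ is a $\Q$-linear combination of matrix coefficients of $F_{\!\A}$, i.e.\ lies in (3).

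For $(3)\subseteq(1)$, fix a number field $L$ with an integral basis $e_1,\dots,e_d$ of $\mathcal{O}_L$ and write $e_j^{\,n}=\sum_i c_{ij}(n)\,e_i$ with $c_{ij}(n)\in\Z$. Each sequence $n\mapsto c_{ij}(n)$ is recurrent, since the $L$-valued sequence $n\mapsto e_j^{\,n}$ satisfies the linear recurrence whose characteristic polynomial is the minimal polynomial of $e_j$, a property inherited by every coordinate. By the description of $F_{\!\A}$ just given, the $(i,j)$ matrix coefficient of $F_{\!\A}$ is $(c_{ij}(p)\bmod p)_p$, which lies in (1); as (1) is visibly closed under $\Q$-linear combinations (and the $\Q$-span of matrix coefficients is independent of the chosen basis), we get $(3)\subseteq(1)$, closing the cycle. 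I expect the main obstacle to be the isomorphism $\Phi$ used in $(2)\subseteq(3)$: it is what allows an \emph{arbitrary} equivariant $g$ --- which is not manifestly a ``twisted trace'' --- to be rewritten in a form that \eqref{eq:bridge} can convert into honest matrix coefficients of $F_{\!\A}$ and, via $n\mapsto\mathrm{Tr}_{L/\Q}(\gamma_i\beta_i^{\,n})$, into honest recurrent sequences. Everything else --- the congruence \eqref{eq:bridge}, the permutation property of power-sum representations, and the explicit form of $F_{\!\A}$ from Definition \ref{deffa} --- is routine bookkeeping, modulo the standing ``large $p$'' integrality caveats.
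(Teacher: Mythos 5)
Your argument is correct; it reaches the same destination as the paper but with a different architecture and one genuinely new step. The paper proves two separate equivalences, $(1)\Leftrightarrow(2)$ (Theorem \ref{thseqA}) and $(2)\Leftrightarrow(3)$ (Theorem \ref{thmat}), whereas you close a cyclic chain $(1)\subseteq(2)\subseteq(3)\subseteq(1)$. Your $(1)\subseteq(2)$ uses the exponential-polynomial normal form $a_n=\sum_j P_j(n)\beta_j^{\,n}$ together with $P_j(p)\equiv P_j(0)\pmod{p}$; the paper writes $a_n=u^TM^nv$ and kills the unipotent part of the Jordan--Chevalley decomposition via $M_u^p\equiv I\pmod p$ --- the same idea in different clothing. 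Your $(2)\subseteq(3)$ is essentially the paper's proof of Theorem \ref{thmat}: identify $A(L)$ with $(L\otimes_\Q L)^\Gamma$ and use the trace form to realize twisted traces as matrix coefficients of $F_{\!\A}$; your explicit averaging operator $\tfrac1{|G|}\Phi$ plays exactly the role of the paper's appeal to the surjectivity of $x\otimes y\mapsto\sum_\sigma\sigma(x)\otimes\sigma(y)$ onto the invariants. The genuinely new piece is your direct $(3)\subseteq(1)$: the sequence $n\mapsto e_j^{\,n}$ satisfies the linear recurrence given by the minimal polynomial of $e_j$, so the matrix entries $c_{ij}(p)\bmod p$ of $F_{\!\A}$ arise from integer recurrent sequences. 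The paper obtains $(3)\subseteq(1)$ only by composing $(3)\subseteq(2)\subseteq(1)$, and its Theorem \ref{thmat} is stated for Galois $L$ while statement (3) of the theorem lets $L$ range over all number fields; your direct argument covers non-Galois $L$ without any reduction to the Galois closure, which is a small but real gain in completeness.
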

\noindent The equivalence of (1) and (2) is Theorem \ref{thseqA}, and the equivalence of (2) and (3) is Theorem \ref{thmat}.

\begin{definition}
We define $\P\subset\A$ to be the set given by Theorem \ref{th1}.
\end{definition}

The Skolem-Mahler-Lech theorem says that if $(a_n)$ is a \rec{} sequence, the set $\{n:a_n=0\}$ is has finite symmetric difference with a finite union of arithmetic progressions. As a consequence of Theorem \ref{th1}, we obtain an analogue of Skolem-Mahler-Lech for the set of primes $\{p:a_p\equiv 0\!\! \mod p\}$. A set $P$ of primes is called \emph{Frobenian} (cf.\ \cite{Ser16}, \S3.3) if there is a finite Galois extension $L/\Q$ and a union of conjugacy classes $C\subset\gal(L/\Q)$ such that $P$ has finite symmetric difference with the set of rational primes whose Frobenius conjugacy class is in $C$. The Chebotarev density theorem implies that the natrual density of a \Frob{} set exists and is a rational number.

\begin{corollary}
\label{th4}
A set $P$ of primes is \Frob{} if and only if there exists a \rec{} sequence $(a_n)$ such that
\[
P=\{p:a_p\equiv 0\!\!\mod p\}.
\]
\end{corollary}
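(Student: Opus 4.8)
The plan is to transfer both implications to Theorem \ref{th1}, using the correspondence between a \rec{} sequence $(a_n)$ and the element $\big(g(\phi_p)\bmod p\big)_p$ of $\P$ attached to a finite Galois $L/\Q$ and a map $g\colon\gal(L/\Q)\to L$ with $g(\sigma\tau\sigma^{-1})=\sigma(g(\tau))$. For the \emph{only if} direction, suppose $P$ has finite symmetric difference with $\{p:\phi_p\in C\}$, where $C\subset\gal(L/\Q)$ is a union of conjugacy classes. Take $g\colon\gal(L/\Q)\to\Q\subset L$ with $g(\tau)=0$ for $\tau\in C$ and $g(\tau)=1$ otherwise. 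As a $\Q$-valued class function, $g$ satisfies the functional equation in Theorem \ref{th1}(2), so Theorem \ref{th1} produces a \rec{} sequence $(a_n)$ with $a_p\equiv g(\phi_p)\bmod p$ for all large $p$; hence $\{p:a_p\equiv 0\bmod p\}$ differs from $P$ by a finite set $S$ of primes. Enlarging $S$ to contain also the finitely many primes at which $a_p\bmod p$ is undefined, redefine $a_p$ for $p\in S$, setting it to $0$ if $p\in P$ and to $1$ otherwise. The new sequence differs from $(a_n)$ in finitely many terms, so it differs from $(a_n)$ by a finitely supported — hence \rec{} — sequence, so it is itself \rec{}, and it realizes $P$ exactly.

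For the \emph{if} direction, let $(a_n)$ be \rec{} with $P=\{p:a_p\equiv 0\bmod p\}$. By Theorem \ref{th1}, $\big(a_p\bmod p\big)_p=\big(g(\phi_p)\bmod p\big)_p$ for some $L$ and $g$ as above, so $P$ has finite symmetric difference with $\{p:g(\phi_p)\equiv 0\bmod p\}$. Since $\tau(g(\tau))=g(\tau\tau\tau^{-1})=g(\tau)$, each $g(\tau)$ lies in the fixed field of $\langle\tau\rangle$, which is why for $p$ unramified in $L$ the reduction $g(\phi_p)\bmod p$ lands in $\Fp$ and is independent of the prime of $L$ above $p$ (see \S2). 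Now fix a conjugacy class $c$ with representative $\tau_0$. If $g(\tau_0)=0$ then $g(\tau)=0$ for all $\tau\in c$, so $g(\phi_p)\equiv 0\bmod p$ whenever $\phi_p\in c$. If $g(\tau_0)\neq 0$ then every $g(\tau)$ with $\tau\in c$ is a nonzero element of $L$, and a fixed nonzero element of $L$ reduces to zero modulo only finitely many primes of $L$; as $c$ is finite, $g(\phi_p)\not\equiv 0\bmod p$ for all but finitely many $p$ with $\phi_p\in c$. Letting $C$ be the union of the conjugacy classes on which $g$ vanishes and discarding the finitely many ramified primes, we conclude that $\{p:g(\phi_p)\equiv 0\bmod p\}$, and hence $P$, has finite symmetric difference with $\{p:\phi_p\in C\}$, so $P$ is \Frob{}.

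I expect the main obstacle to be the bookkeeping in the \emph{if} direction: making precise what $g(\phi_p)\bmod p$ means through the fixed-field observation and the well-definedness recalled in \S2, isolating the finitely many ramified primes and the primes dividing the relevant denominators or algebraic numbers, and checking that the functional equation forces the dichotomy ``$g$ vanishes on all of $c$'' versus ``$g$ is nowhere zero on $c$''. The \emph{only if} direction is routine once one observes that finitely many terms of a \rec{} sequence may be altered without destroying recurrence.
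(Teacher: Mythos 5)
Your proof is correct and follows essentially the same route as the paper: both directions are reduced to Theorem \ref{thseqA}, taking $C=\{\sigma:g(\sigma)=0\}$ in the ``if'' direction and $g$ the indicator of the complement of $C$ in the ``only if'' direction. The only difference is in the bookkeeping for the finitely many exceptional primes, which the paper handles by multiplying the sequence by a suitable rational constant while you alter finitely many terms directly (valid, since a finitely supported sequence is \rec{} under the paper's definition).
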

\noindent Unlike the Skolem-Mahler-Lech Theorem, Corollary \ref{th4} is effective: given the recurrence relation satisfied by $(a_n)$ and a list of initial values, there is a finite algorithm to determine the number field $L$, union of conjugacy classes $C\subset\gal(L/\Q)$, and the finite exceptional set.

We also prove some results about polynomial equations satisfied by elements of $\P$. The first of these results implies that $\P$ is an integral extension of $\Q$.
\begin{theorem}
\label{th2}
Suppose $\alpha\in\P$. Then there exists a non-zero polynomial $f(x)\in\Q[x]$ such that $f(\alpha)=0$, and every such $f(x)$ has a rational root. 
\end{theorem}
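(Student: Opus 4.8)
The plan is to use characterization (2) of Theorem~\ref{th1}: fix a finite Galois extension $L/\Q$, write $G=\gal(L/\Q)$, and choose $g\colon G\to L$ with $g(\sigma\tau\sigma^{-1})=\sigma(g(\tau))$ so that $\alpha=\big(g(\phi_p)\bmod p\big)_p$. For the existence of an annihilating polynomial, I would first observe that the finite image $S:=g(G)\subseteq L$ is stable under $G$, since $\sigma(g(\tau))=g(\sigma\tau\sigma^{-1})\in S$ for all $\sigma,\tau\in G$. Consequently the monic polynomial $f_0(x):=\prod_{\beta\in S}(x-\beta)$ has $G$-invariant coefficients and hence lies in $\Q[x]$, and clearly $f_0\ne 0$. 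Since $f_0$ vanishes on $S$ and $g(\phi_p)\in S$, we have $f_0(g(\phi_p))=0$ in $L$, so the $p$th component of $f_0(\alpha)$ is $0$ for all sufficiently large $p$; thus $f_0(\alpha)=0$. (As $f_0$ is monic, this already shows $\alpha$ is integral over $\Q$.)

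For the second assertion the key point is that $g(e)$ is automatically rational: taking $\tau$ to be the identity $e\in G$ in the relation $g(\sigma\tau\sigma^{-1})=\sigma(g(\tau))$ gives $\sigma(g(e))=g(e)$ for every $\sigma\in G$, so $g(e)\in L^{G}=\Q$. Now let $f\in\Q[x]$ be any nonzero polynomial with $f(\alpha)=0$; since a nonzero constant is not $0$ in $\A$ we must have $\deg f\ge 1$. By the Chebotarev density theorem there are infinitely many primes $p$ that split completely in $L$, and for each such (large) $p$ the Frobenius $\phi_p$ is trivial, so the $p$th component of $\alpha$ equals $g(e)\bmod p$. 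Feeding this into $f(\alpha)=0$ gives $f(g(e))\equiv 0\bmod p$ for infinitely many $p$, and since $f(g(e))$ is a fixed rational number it must be $0$. Hence $g(e)$ is a rational root of $f$, which is what we want.

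I do not expect a serious obstacle: given Theorem~\ref{th1} the argument is short, and the only external ingredient is the Chebotarev density theorem, invoked solely to produce infinitely many completely split primes. The one delicate point is the well-definedness of the component map ``$g(\phi_p)\bmod p$'' as an element of $\F_p$: one must know that the Frobenius value $g(\tau_{\fp})$ is fixed by the decomposition group at $\fp$, hence lies in a subfield with residue field $\F_p$, and that the reduction does not depend on the choice of $\fp\mid p$. This is exactly what the footnote to Theorem~\ref{th1}(2) addresses, and in the second half of the proof it is transparent since there $g(\phi_p)=g(e)\in\Q$. If one wanted the stronger statement that $g(\tau)$ is a root of \emph{every} annihilator of $\alpha$ for each $\tau\in G$, one would repeat the density argument inside the fixed field $L^{\langle\tau\rangle}$, using the primes whose Frobenius lies in the conjugacy class of $\tau$; this refinement is not needed here.
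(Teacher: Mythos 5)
Your proof is correct and follows essentially the same route as the paper: both reduce to a Galois-equivariant function $g$ via Theorem~\ref{th1}(2), and both obtain the rational root by evaluating at the identity (where $g(e)\in\Q$ by equivariance) along the infinitely many completely split primes. The only cosmetic difference is in the existence step, where you explicitly take $f_0(x)=\prod_{\beta\in g(G)}(x-\beta)$ while the paper invokes finite-dimensionality of the algebra $A(L)$; your version has the small added benefit of producing a monic annihilator, hence integrality of $\alpha$ over $\Q$.
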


\begin{remark}
The Fibonacci sequence $F_n$
is known to satisfy the congruence $F_p\equiv \lp\frac{p}{5}\rp\!\!\mod p$ for every prime $p$, where $ \lp\frac{p}{5}\rp$ is a Legendre symbol. Thus $f(F_p)\equiv 0\!\!\mod p$ for $p\geq 7$, where $f(x)=x^2-1\in\Q[x]$. Theorem \ref{th2} implies that every \rec{} sequence satisfies an analogous identity for some $f$, which necessarily has a rational root.
\end{remark}

We also prove a result about the density of the set of primes $p$ for which $f(a_p)\equiv 0$ mod $p$, when $(a_p)\in\P$ and $f(x)\in\Q[x]$.

\begin{theorem}
\label{th3}
For $f(x)\in\Q[x]$, we have
\begin{equation}
\label{eqineq}
\sup_{(a_p)\in\P}\delta\bigg(\big\{p:f(a_p)\equiv 0\!\!\mod p\big\}\bigg) = \delta\bigg(\big\{p:f\text{ has a root mod } p\big\}\bigg),
\end{equation}
where $\delta$ denotes natural density. Moreover if $f(x)$ has no rational roots, then there is no element of $\P$ realizing the supremum.
\end{theorem}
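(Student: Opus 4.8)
I would organize the argument around the easy upper bound, the two trivial cases, non‑attainment, and the substantive lower bound.

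\medskip\noindent\textbf{Easy direction and reductions.} First note that $\{p:f(a_p)\equiv 0\bmod p\}\subseteq\{p:f\text{ has a root}\bmod p\}$ for \emph{every} $(a_p)$ (with no exceptional set), which gives ``$\le$'' in \eqref{eqineq}; both densities exist, since the right-hand set is \Frob{} by the Chebotarev density theorem and, as $\P$ is a $\Q$-algebra, $(f(a_p))_p\in\P$, so the left-hand set is \Frob{} by Corollary \ref{th4}. Passing from $f$ to its radical changes neither side of \eqref{eqineq} (for large $p$, $f(a_p)\equiv 0$ iff some linear factor of $f\bmod p$ vanishes at $a_p$), so I may assume $f$ squarefree; the constant case being vacuous, assume $\deg f\ge 1$. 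If $f$ has a rational root $r$, both sides of \eqref{eqineq} equal $1$, attained by the constant sequence $(r)_p\in\P$; so from now on assume $f$ has no rational root, whence $\deg f\ge 2$.

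\medskip\noindent\textbf{Non-attainment.} Given $(a_p)=(g(\phi_p)\bmod p)_p$ as in Theorem \ref{th1}(2), I would first enlarge the defining field $M$, replacing $g$ by its composite with the restriction map $\gal(ML_0/\Q)\to\gal(M/\Q)$, so that $M$ contains the splitting field $L_0$ of $f$. Putting $\sigma=\tau=1$ in $g(\sigma\tau\sigma^{-1})=\sigma(g(\tau))$ shows $g(1)\in M^{\gal(M/\Q)}=\Q$, which is not a root of $f$; hence $f(g(1))\in\Q^{\times}$ and $f(a_p)\not\equiv 0\bmod p$ for all but finitely many primes $p$ splitting completely in $M$. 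That set has density $1/[M:\Q]>0$ and lies inside $\{p:f\text{ has a root}\bmod p\}$, so $\delta(\{p:f(a_p)\equiv 0\})\le\delta(\{p:f\text{ has a root}\bmod p\})-1/[M:\Q]$; thus no element of $\P$ attains the supremum.

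\medskip\noindent\textbf{The lower bound.} It remains to bring the left side of \eqref{eqineq} within $\varepsilon$ of the right side. Let $G_0=\gal(L_0/\Q)$, let $\alpha_1,\dots,\alpha_d$ be the roots of $f$, and let $H\subseteq G_0$ be the conjugation-closed set of elements fixing some $\alpha_i$, so the right side of \eqref{eqineq} is $|H|/|G_0|$ by Chebotarev. Fix $\varepsilon>0$ and a prime $\ell\nmid|G_0|\cdot\mathrm{disc}(L_0)$. For a parameter $r$ I would choose a finite Galois $L\supseteq L_0$ over $\Q$ with $N:=\gal(L/L_0)\cong\F_\ell[G_0]^{\oplus r}$ and $G:=\gal(L/\Q)\cong N\rtimes G_0$, the action of $G_0$ on $N$ being $r$ copies of the left regular representation --- a split embedding problem over $\Q$ that is solvable, e.g.\ by Kummer theory over $L_0(\zeta_\ell)$ using primes of $L_0$ with prescribed decomposition group. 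Let $\widetilde H\subseteq G$ be the preimage of $H$, so $|\widetilde H|/|G|=|H|/|G_0|$, and call $\sigma\in\widetilde H$ \emph{bad} if the image $\overline{C_G(\sigma)}$ of its centralizer under $G\to G_0$ fixes no $\alpha_i$; badness is conjugation-invariant. Granting that $r$ can be chosen with $|\{\text{bad }\sigma\}|/|G|<\varepsilon$, I would define $g\colon G\to L$ by: on each conjugacy class inside $\widetilde H$ consisting of non-bad elements, pick an $\alpha_i$ fixed by $\overline{C_G(\sigma)}$ (so $\alpha_i\in L^{C_G(\sigma)}$), set $g(\sigma)=\alpha_i$, and extend by $g(\rho\sigma\rho^{-1})=\rho(\alpha_i)$ --- well defined because $C_G(\sigma)\subseteq\gal(L/\Q(\alpha_i))$ --- and $g=0$ on everything else. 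This $g$ satisfies the functional equation of Theorem \ref{th1}(2), so $(g(\phi_p)\bmod p)_p\in\P$; whenever $p$ is unramified in $L$ with $\phi_p$ in a non-bad class inside $\widetilde H$, the value $g(\phi_p)$ is a root of $f$, so $f(g(\phi_p))\equiv 0\bmod p$; and by Chebotarev the density of such $p$ is $\ge|H|/|G_0|-|\{\text{bad }\sigma\}|/|G|>|H|/|G_0|-\varepsilon$, as required.

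\medskip\noindent\textbf{The key estimate and the main obstacle.} The crux is the bound on bad elements. Writing elements of $G$ as $(v,\tau)$ with $v\in N$, $\tau\in G_0$, a computation in the semidirect product gives $\overline{C_G((v,\tau))}=\{\kappa\in C_{G_0}(\tau):(1-\kappa)v\in(1-\tau)N\}$, where $1-\kappa,\,1-\tau$ denote $\mathrm{id}$ minus the action. When $\tau=1$ this is $\mathrm{Stab}_{G_0}(v)$, which is trivial --- hence fixes every $\alpha_i$ --- for all $v$ outside a union of fewer than $|G_0|$ subspaces each of codimension $\ge r|G_0|/2$, by freeness of the regular summands. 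When $1\neq\tau\in H$, $\overline{C_G((v,\tau))}\supseteq\langle\tau\rangle$, which already fixes some $\alpha_i$, so $(v,\tau)$ can be bad only if $(1-\kappa)v\in(1-\tau)N$ for some $\kappa\in C_{G_0}(\tau)\setminus\langle\tau\rangle$; using $\ell\nmid|G_0|$ (so that $1-\tau$ and $1-\kappa$ are semisimple with kernels and images spanned by indicator functions of left $\langle\tau\rangle$- and $\langle\kappa\rangle$-cosets), one computes that for fixed $\tau,\kappa$ these $v$ form a subspace of codimension $r|G_0|\bigl(|\langle\tau\rangle|^{-1}-|\langle\tau,\kappa\rangle|^{-1}\bigr)\ge r/2$. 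Summing over the $\le|G_0|^2$ pairs $(\tau,\kappa)$ then yields $|\{\text{bad }\sigma\}|/|G|\le|G_0|\,\ell^{-r/2}$, which is $<\varepsilon$ once $r$ is large. I expect the delicate points to be precisely this codimension bookkeeping --- in particular the value of $\dim\bigl(\mathrm{Im}(1-\tau)\cap\mathrm{Im}(1-\kappa)\bigr)$ in the regular representation, which is controlled by $|\langle\tau,\kappa\rangle|$ --- together with pinning down the existence of the auxiliary extension $L$ realizing $N\rtimes G_0$ over $\Q$ with the prescribed fixed field $L_0$ and action.
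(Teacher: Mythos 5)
Your proposal is correct and follows essentially the same route as the paper: non-attainment via the identity element of the Galois group (i.e.\ the completely split primes), and sharpness via an auxiliary wreath-product extension $A^{G_0}\rtimes G_0$ in which all but an $\epsilon$-fraction of elements have centralizer with small image under the projection $\pi$ --- your $\F_\ell[G_0]^{\oplus r}\rtimes G_0$ is literally $\big((\Z/\ell)^r\big)^{G_0}\rtimes G_0$. The only divergence is in the implementation: the paper takes $\ell=2$ and realizes the wreath product explicitly by adjoining square roots of the Galois conjugates of generators of principal degree-one primes of $L$, which sidesteps both your appeal to the solvability of a split embedding problem over $L_0(\zeta_\ell)$ and your semisimplicity hypothesis $\ell\nmid|G_0|$; its centralizer count needs only the surjectivity of $\varphi\mapsto\sum_{n}\big(\varphi(\sigma^n\tau)-\varphi(\sigma^n)\big)$ for $\tau\notin\langle\sigma\rangle$, giving codimension at least $r$ directly in place of your exact-codimension bookkeeping.
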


In \S\ref{secfrob} we explain the analogy between $\cP_{\!\A}^0\subset\A$ and $\Qb\subset\C$, and the relationship with periods.

%%%%%%%%%%%%
%
%		Functions on a Galois group
%
%%%%%%%%%%%%
\section{Functions on a Galois group}
\label{secfungal}
Let $L/\Q$ be a finite Galois extension, with ring of integers $\cO_L$ and Galois group $\Gamma:=\gal(L/\Q)$.
\begin{definition}[\cite{Ros17}, \S2]
We define $A(L)$ to be the set of functions $g:\Gamma\to L$ satisfying 
\begin{equation}
\label{eqg}
g(\sigma\tau\sigma^{-1})=\sigma(g(\tau))
\end{equation}
 for all $\sigma$, $\tau\in \Gamma$, which is a commutative $\Q$-algebra under pointwise addition and multiplication.
 \end{definition}
 
For $g\in A(L)$, let $p$ be a rational prime unramified in $L$ that is coprime to the denominators of all values of $g$. Let $\mathfrak{P}$ be a prime of $L$ over $p$, with Frobenius element $\phi_\mathfrak{P}\in \Gamma$. It follows from \eqref{eqg} that the residue class
\begin{equation}
\label{eqres}
g(\phi_\mathfrak{P})\mod \mathfrak{P}
\end{equation}
is fixed by $\phi_\mathfrak{P}$, so \eqref{eqres} is an element of $\Fp\subset \cO_L/\mathfrak{P}$. It can be checked that the value of $g(\phi_\mathfrak{P})$ mod $\mathfrak{P}$ is independent of the choice of $\mathfrak{P}|p$ (see \cite{Ros17}, \S4), and we write $g(\phi_p)$ mod $p$ for this residue class in $\Fp$. We leave $g(\phi_p)$ mod $p$ undefined for the finitely many primes that are either ramified in $L$ or are not coprime to the denominators of $g$.

The following result gives equivalence of conditions (1) and (2) in the statement of Theorem \ref{th1}.
\begin{theorem}
\label{thseqA}
An element of $\A$ has the form $(a_p\mod p)_p$ for some recurrent sequence $(a_n)$ if and only if that element of $\A$ can be written $\big(g(\phi_p)\mod p\big)$ for some finite Galois extension $L/\Q$ and some $g\in A(L)$.
\end{theorem}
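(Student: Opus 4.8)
The plan is to prove both implications by routing through \emph{trace sequences}, using as the only arithmetic input the Frobenius congruence: if $L/\Q$ is finite Galois with $\Gamma=\gal(L/\Q)$ and $\mathfrak{P}$ is a prime of $L$ over an unramified rational prime $p$ with Frobenius $\phi_{\mathfrak{P}}\in\Gamma$, then $x^{p}\equiv\phi_{\mathfrak{P}}(x)\pmod{\mathfrak{P}}$ for every $x$ in the localization $\cO_{L,\mathfrak{P}}$. The elementary observation to record first is that for $\theta,\alpha\in\cO_{L}$ the $\Q$-valued sequence $a_{n}:=\operatorname{Tr}_{L/\Q}(\theta\alpha^{n})=\sum_{\rho\in\Gamma}\rho(\theta)\rho(\alpha)^{n}$ is \rec{} (its characteristic polynomial is a power of the minimal polynomial of $\alpha$), and that for all but finitely many $p$, reducing mod a prime $\mathfrak{P}\mid p$ and applying the Frobenius congruence term by term gives $a_{p}\equiv g_{\theta,\alpha}(\phi_{p})\pmod{p}$, where $g_{\theta,\alpha}(\tau):=\sum_{\rho\in\Gamma}\rho(\theta)\,(\tau\rho)(\alpha)$. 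A one-line check shows $g_{\theta,\alpha}\in A(L)$, so these trace sequences already exhibit the elements $\big(g_{\theta,\alpha}(\phi_{p})\bmod p\big)_{p}$ as coming from recurrent sequences.

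For the implication ``\rec{} sequence $\Rightarrow$ arising from some $g\in A(L)$'', I would invoke the structure theory of linear recurrences over $\Q$: there is an $n_{0}$ and a $\gal(\Qb/\Q)$-stable finite set of pairs $(\lambda_{i},p_{i})$ with the $\lambda_{i}\in\Qb^{\times}$ distinct and $p_{i}\in\Qb[x]$ such that $a_{n}=\sum_{i}p_{i}(n)\lambda_{i}^{n}$ for $n\geq n_{0}$. Choosing a finite Galois $L$ containing all $\lambda_{i}$ and all coefficients of the $p_{i}$, Galois stability lets one regroup the sum over $\Gamma$-orbits of eigenvalues and rewrite $a_{n}$ (for $n\geq n_{0}$), after clearing denominators, as a $\Q$-linear combination of terms $n^{j}\operatorname{Tr}_{L/\Q}(\gamma\mu^{n})$ with $\gamma\in\cO_{L}$, $\mu\in L^{\times}$. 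Evaluating at a large prime $p$: the terms with $j\geq 1$ contribute $0$ mod $p$ (they are $p$ times a $p$-integral quantity), while the terms with $j=0$ contribute the corresponding $g_{\gamma,\mu}(\phi_{p})$ by the first paragraph; hence $\big(a_{p}\bmod p\big)_{p}=\big(g(\phi_{p})\bmod p\big)_{p}$ for the element $g\in A(L)$ obtained as the matching $\Q$-linear combination of the $g_{\gamma,\mu}$.

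For the converse one must realize \emph{every} $g\in A(L)$, not merely the $g_{\theta,\alpha}$. I would note that the subset of $\A$ realizable as $\big(g(\phi_{p})\bmod p\big)_{p}$ with $g$ in some $A(L)$ is a $\Q$-subspace (to add two realizations, pass to a common Galois field and use that sums and $\Q$-multiples of \rec{} sequences are \rec{}), so by the first paragraph it suffices to prove that $\{g_{\theta,\alpha}:\theta,\alpha\in\cO_{L}\}$ spans $A(L)$ over $\Q$. This I would extract from the identification $A(L)=\operatorname{Fun}(\Gamma,L)^{\Gamma}$ for the $\Gamma$-action $(\rho\cdot h)(\tau)=\rho\big(h(\rho^{-1}\tau\rho)\big)$ on the space $\operatorname{Fun}(\Gamma,L)$ of all maps $\Gamma\to L$: the averaging operator $\pi=\tfrac1{|\Gamma|}\sum_{\rho\in\Gamma}\rho\cdot(-)$ is a $\Q$-linear surjection $\operatorname{Fun}(\Gamma,L)\twoheadrightarrow A(L)$; the standard isomorphism $L\otimes_{\Q}L\iso\operatorname{Fun}(\Gamma,L)$, $\theta\otimes\alpha\mapsto\big(\tau\mapsto\theta\,\tau(\alpha)\big)$, shows $\operatorname{Fun}(\Gamma,L)$ is $\Q$-spanned by the functions $\tau\mapsto\theta\,\tau(\alpha)$; and a direct computation gives $\pi\big(\tau\mapsto\theta\,\tau(\alpha)\big)=\tfrac1{|\Gamma|}g_{\theta,\alpha}$. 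Hence the $g_{\theta,\alpha}$ span $A(L)$, finishing the converse.

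I expect the main obstacle to be bookkeeping rather than conceptual, and to be concentrated in the second paragraph: getting the structure theorem in precisely the shape needed (distinct eigenvalues, Galois-equivariant polynomial coefficients, the regrouping of orbit sums into honest traces over the appropriate subfield) and controlling the finitely many excluded primes together with the denominators of the $\gamma$'s and $\mu$'s. The two genuinely structural facts — the trace construction $g_{\theta,\alpha}$ together with the Frobenius congruence, and the identification $A(L)=\operatorname{Fun}(\Gamma,L)^{\Gamma}$ with $g_{\theta,\alpha}$ the image of a pure tensor under averaging — are short; the work is in making the reduction of a general linear recurrence to trace sequences airtight.
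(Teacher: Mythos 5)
Your proposal is correct and follows essentially the same route as the paper: both directions reduce to the identification $A(L)\cong(L\otimes_\Q L)^{\Gamma}$ together with the congruence $x^p\equiv\phi_{\mathfrak{P}}(x)\bmod\mathfrak{P}$, and your $g_{\theta,\alpha}$ is exactly the image under the paper's isomorphism $\varphi$ of the symmetrized tensor $\sum_{\rho}\rho(\theta)\otimes\rho(\alpha)$, so your spanning argument for the converse is the paper's surjectivity of $x\otimes y\mapsto\sum_\sigma\sigma(x)\otimes\sigma(y)$ in disguise. The only presentational difference is in the forward direction, where the paper disposes of the polynomial-in-$n$ part via the Jordan--Chevalley decomposition (using $M_u^p\equiv I\bmod p$) while you dispose of it in the exponential-polynomial closed form via $p^j\equiv 0\bmod p$ for $j\geq 1$ --- the same phenomenon in two guises.
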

\begin{proof}
($\Longrightarrow$) Let $(a_n)$ be a \rec{} sequence. Then there exist column vectors $u$, $v$ and an invertible matrix $M$, with entries in $\Q$, such that
\[
a_n=u^T M^nv
\]
for all $n\in\Z$. There is a Jordan-Chevalley decomposition
\[
M=M_{ss}M_u,
\]
where $M_{ss}$ is semi-simple, $M_u$ is unipotent, and $M_{ss}$ commutes with $M_u$. For every prime $p$ larger than the size of $M_u$ that is coprime to all denominators appearing in $M_u$, the $p$-th power $M_u^p$ is congruent to the identity matrix modulo $p$, and if in addition $p$ is coprime to denominators appearing in $u$ and $v$, then
\begin{equation}
\label{eqss}
a_p\equiv u^TM_{ss}^pv\mod p.
\end{equation}

Let $L$ be a finite Galois extension of $\Q$ over which $M_{ss}$ diagonalizes, let $\lambda_1,\ldots,\lambda_k\in L$ be the eigenvalues of $M_{ss}$, and write $\Gamma=\gal(L/\Q)$. Using the Jordan normal form of $M_{ss}$, it follows from \eqref{eqss} that there are elements $b_1,\ldots,b_k\in L$ such that
\[
a_p\equiv \sum_i b_i \lambda_i^p \mod p,
\]
and $\Gamma$ permutes the pairs $b_i$, $\lambda_i$, i.e.\ the element
\[
\alpha:=\sum_i b_i\otimes\lambda_i\in L\otimes_\Q L
\]
is invariant under the diagonal action of $\Gamma$. There is a canonical isomorphism
\begin{align*}
\varphi:L\otimes_\Q L&\to \hom(\Gamma,L),\\
x\otimes y&\mapsto \bigg(\sigma\mapsto x\sigma(y)\bigg),
\end{align*}
taking the $\Gamma$-invariant elements of $L\otimes L$ to $A(L)$, and we let $g=\varphi(\alpha)\in A(L)$. If $p$ is a rational prime unramified in $L$ coprime to every denominator of the values of $g$, then for every prime $\mathfrak{P}$ of $L$ over $p$,
\begin{align*}
g(\phi_{\mathfrak{P}})&=\sum_i b_i \phi_{\mathfrak{P}}(\lambda_i)\\
&\equiv\sum_i b_i \lambda_i^p\mod \mathfrak{P}\\
&\equiv a_p\mod \mathfrak{P}.
\end{align*}
Thuse we have $(a_p\mod p)=(g(\phi_p)\mod p)$.

($\Longleftarrow$) Suppose $g\in A(L)$ is given, and let
\begin{equation}
\varphi^{-1}(g)=\sum b_i\otimes \lambda_i\in (L\otimes L)^\Gamma,
\end{equation}
where we may choose $b_i$, $\lambda_i$ such that the pairs $(b_i,\lambda_i)$ are permuted by $\Gamma$. Then the sequence
\[
a_n:= \sum_i b_i \lambda_i^n
\]
is \rec{}, and takes values in $\Q$ because the pairs $(b_i,\lambda_i)$ are permuted by $\Gamma$. By the computation above, we see that
\[
a_p\equiv g(\phi_p)\mod p
\]
for all sufficiently large $p$. This completes the proof.
\end{proof}

\begin{remark}
\label{remper}
Let $L/\Q$ be a finite Galois extension. In the language of motives, the ring $A(L)$ defined in \S\ref{secfungal} is the ring of de Rham motivic periods of $\spe\,L$ (see \cite{Bro14}, \S1.2, and \cite{Ros17}, \S5). There is a ring homomorphism
\begin{align*}
\per_{\!\A}:A(L)&\to\A,\\
g&\mapsto \big(g(\phi_p)\mod p\big)_p,
\end{align*}
which is an example of an \emph{$\A$-valued period map} (see \cite{Ros18a}, \S5).
\end{remark}

%%%%%%%%%%%%
%
%		Proofs of the theorems
%
%%%%%%%%%%%%
\section{Proofs of the theorems}
\label{secproofs}
\noindent In this section we prove Corollary \ref{th4}, and Theorems \ref{th2} and \ref{th3}.
\bigskip

\begin{proof}[Proof of Corollary \ref{th4}]
Suppose $(a_n)$ is a \rec{} sequence. Let $L/\Q$ and $g\in A(L)$ be as in the statement of Theorem \ref{thseqA}. Then for all primes $p$ unramified in $L$ coprime to the numerators and denominators of all non-zero values of $L$ and all $\mathfrak{P}|p$, we have
\[
a_p\equiv 0\mod p\Leftrightarrow g(\phi_\mathfrak{P})=0.
\]
So we may take $C=\{\sigma\in\gal(L/\Q):g(\sigma)=0\}$, which is a union of conjugacy classes by \eqref{eqg}.

Conversely, suppose $L/\Q$ and $C\subset\gal(L/\Q)$ are given. Let $g\in A(L)$ be the characteristic function of $C$, and let $a_n$ be a \rec{} sequence such that
\[
a_p\equiv g(\phi_p)\mod p
\]
for all but finitely many $p$ (which exists by Theorem \ref{thseqA}). Then $\{p:a_p\equiv 0\mod p\}$ coincides with $\{p:\phi_p\subset C\}$ up to a finite set. We can multiply the sequence $(a_n)$ through by a constant rational number to modify $\{p:a_p\equiv 0\mod p\}$ by any finite set. This completes the proof.
\end{proof}

\begin{proof}[Proof of Theorem \ref{th2}]
Suppose $(a_p)_p\in\P$ is given. By Theorem \ref{thseqA}, we can find $L/\Q$ and $g\in A(L)$ such that $a_p\equiv g(\phi_p)\mod p$ for all sufficiently large $p$. Since $A(L)$ is a finite-dimensional $\Q$-algebra, there is a non-zero $f(x)\in\Q[x]$ such that $f(g)=0$, which implies
\begin{equation}
\label{eq0}
f(a_p)\equiv f(g(\phi_p))\equiv  0\mod p
\end{equation}
for all sufficiently large $p$. We can scale $f(x)$ by a rational constant to make \eqref{eq0} hold for all $p$.

Now suppose we are given $f(x)\in\Q[x]$ with $f(a_p)\equiv 0$ mod $p$ for all $p$. There are infinitely many primes $p$ that split completely in $L$, and for all but finitely many of these $p$, we have
\begin{equation}
\label{zerop}
f(a_p)\equiv f(g(1))\equiv 0\mod p,
\end{equation}
where $1\in\gal(L/\Q)$ is the identity element. Since \eqref{zerop} holds for arbitrarily large $p$, it follows that $f(g(1))=0$. Finally, \eqref{eqg} implies that $g(1)\in\Q$, so we conclude $f(x)$ has a rational root.
\end{proof}

Before proving Theorem \ref{th3}, we need some preliminary results. Suppose $f(x)\in\Q[x]$ is monic, let $L/\Q$ be a finite Galois extension over which $f(x)$ splits into linear factors, and define $\Gamma:=\gal(L/\Q$). Let $\alpha_1,\ldots,\alpha_n$ be the roots of $f$ in $L$, and for $1\leq i\leq n$ set $\Gamma_i=\gal(L/\Q(\alpha_i))\subset\Gamma$.

\begin{lemma}
\label{lem1}
Let $p$ be a rational prime unramified in $L$ that is coprime to the denominators of coefficients of $f$. Then $f(x)$ has a root modulo $p$ if and only if the Frobenius conjugacy class $\phi_p\subset \Gamma$ is contained in
\[
S_1:=\bigcup_i \Gamma_i.
\]
\end{lemma}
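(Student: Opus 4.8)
The plan is to translate "$f$ has a root mod $p$" into a statement about how $p$ splits in the fields $\Q(\alpha_i)$, and then into a statement about the Frobenius conjugacy class in $\Gamma$. First I would recall the standard fact from algebraic number theory that, since $p$ is unramified in $L$ (hence in each subfield $\Q(\alpha_i)$) and coprime to the denominators of the coefficients of $f$, the factorization of $f$ modulo $p$ mirrors the splitting of $p$: more precisely, $f$ has a root modulo $p$ if and only if $p$ has a degree-one prime factor in $\cO_{\Q(\alpha_i)}$ for some $i$, equivalently $\Q(\alpha_i)$ embeds into $\F_p$ after reduction at some prime above $p$ for some $i$.

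Next I would reinterpret "$p$ has a degree-one prime in $\Q(\alpha_i)$" via Frobenius elements. Fixing a prime $\mathfrak{P}$ of $L$ above $p$ with Frobenius $\phi_{\mathfrak{P}}\in\Gamma$, the prime of $\Q(\alpha_i)$ below $\mathfrak{P}$ has residue degree equal to the order of the image of $\phi_{\mathfrak{P}}$ in the coset space $\Gamma_i\backslash\Gamma$ — or, more usefully, $p$ admits \emph{some} degree-one prime in $\Q(\alpha_i)$ precisely when some $\Gamma$-conjugate of $\phi_{\mathfrak{P}}$ lies in $\Gamma_i$. Running over all primes $\mathfrak{P}\mid p$ replaces $\phi_{\mathfrak{P}}$ by its full conjugacy class $\phi_p$, so the condition becomes: $\phi_p\cap\Gamma_i\neq\emptyset$ for some $i$, i.e.\ $\phi_p\cap S_1\neq\emptyset$. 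Since $S_1=\bigcup_i\Gamma_i$ is a union of subgroups, it is stable under conjugation by elements of the normalizers involved but \emph{not} a priori $\Gamma$-stable; however, as $\phi_p$ is a full conjugacy class, "$\phi_p$ meets $S_1$" is equivalent to "$\phi_p$ is contained in the $\Gamma$-closure of $S_1$," and I would note that in the statement of the lemma "$\phi_p\subset S_1$" should be read in this conjugation-invariant sense (or one restricts attention to a conjugation-stable enlargement of $S_1$; this matches the convention used for $C$ in the proof of Corollary \ref{th4}). I would make this point explicit to avoid ambiguity.

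Assembling these equivalences gives the chain: $f$ has a root mod $p$ $\iff$ some $\Q(\alpha_i)$ has a degree-one prime over $p$ $\iff$ some conjugate of $\phi_{\mathfrak{P}}$ lies in some $\Gamma_i$ $\iff$ $\phi_p$ meets $S_1=\bigcup_i\Gamma_i$, which is the assertion. The main obstacle, such as it is, is purely expository: pinning down the precise sense in which $\phi_p$ (a conjugacy class) is "contained in" $S_1$ (a union of subgroups that need not be a union of conjugacy classes), and making sure the unramifiedness and coprimality hypotheses are exactly what is needed for the factorization-of-$f$-mod-$p$ dictionary to be valid. Once the conventions are fixed, each individual equivalence is a textbook fact (e.g.\ \cite{Ser16}, or any standard reference on splitting of primes), so no serious computation is required.
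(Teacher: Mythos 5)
Your chain of equivalences ($f$ has a root mod $p$ $\iff$ some $\alpha_i$ reduces into $\F_p$ at some $\mathfrak{P}\mid p$ $\iff$ some conjugate of $\phi_{\mathfrak{P}}$ lies in some $\Gamma_i$ $\iff$ $\phi_p$ meets $S_1$) is exactly the paper's argument. But the "expository obstacle" you flag at the end --- that $S_1$ is "not a priori $\Gamma$-stable," so that the lemma's containment $\phi_p\subset S_1$ must be "read in a conjugation-invariant sense" --- is a misdiagnosis, and resolving it by reinterpreting the statement means you have not proved the lemma as written. The point you are missing is that $\alpha_1,\ldots,\alpha_n$ are \emph{all} the roots of $f$ in $L$ (the lemma's setup takes $L$ to be a splitting field), so $\Gamma$ permutes the set $\{\alpha_i\}$, and hence $\sigma\Gamma_i\sigma^{-1}=\gal(L/\Q(\sigma(\alpha_i)))=\Gamma_j$ for some $j$. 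Therefore $S_1=\bigcup_i\Gamma_i$ \emph{is} closed under conjugation, i.e.\ it is a union of conjugacy classes, and "$\phi_p$ meets $S_1$" is equivalent to "$\phi_p\subset S_1$" with no reinterpretation needed. This one-line observation is precisely how the paper closes its proof; replace your final paragraph with it and the argument is complete.
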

\begin{proof}
There is a root of $f(x)$ in $\Fp$ if and only if for some (equivalently, every) prime $\mathfrak{P}$ of $L$ over $p$, there is some $i$ for which $\alpha_i\mod \mathfrak{P}$ is in $\Fp\subset\cO_L/\mathfrak{P}$. Now, $\alpha_i\mod \mathfrak{P}$ is in $\Fp$ if and only if $\phi_\mathfrak{P}(\alpha_i)=\alpha_i$, which happens if and only if $\phi_\mathfrak{P}\in\Gamma_i$. So $f$ has a root in $\Fp$ if and only if there exists $\mathfrak{P}|p$ with $\phi_\mathfrak{P}\in\bigcup\Gamma_i$. Since $\bigcup\Gamma_i$ is closed under conjugation, this is equivalent to the condition that $\phi_p\subset\bigcup\Gamma_i$.
\end{proof}

We also need the following fact.
\begin{lemma}
\label{lem2}
Define a set
\[
S_2:=\bigcup_i\big\{\sigma\in \Gamma:C_\Gamma(\sigma)\subset \Gamma_i\big\},
\]
where $C_\Gamma(\sigma)$ is the centralizer of $\sigma$ inside $\Gamma$. The for every $g\in A(L)$, we have
\begin{equation}
\label{eqsett}
\big\{\sigma\in \Gamma:f(g(\sigma))=0\big\}\subseteq S_2,
\end{equation}
and there exists $g\in A(L)$ for which \eqref{eqsett} is an equality of sets.
\end{lemma}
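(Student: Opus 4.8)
The plan is to prove the two assertions separately. For the inclusion, I would start from an element $\sigma \in \Gamma$ with $f(g(\sigma)) = 0$ and show that $g(\sigma)$ must equal $\alpha_i$ for some $i$, with $C_\Gamma(\sigma) \subseteq \Gamma_i$. The defining relation \eqref{eqg} specialized to $\tau = \sigma$ gives $g(\sigma\sigma\sigma^{-1}) = g(\sigma) = \sigma(g(\sigma))$; more usefully, taking any $\rho \in C_\Gamma(\sigma)$ and applying \eqref{eqg} with the roles $(\sigma,\tau) = (\rho,\sigma)$ yields $g(\rho\sigma\rho^{-1}) = g(\sigma) = \rho(g(\sigma))$. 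So $g(\sigma)$ is fixed by the entire centralizer $C_\Gamma(\sigma)$. Since $f(g(\sigma)) = 0$ and $f$ splits over $L$, we have $g(\sigma) = \alpha_i$ for some $i$; then $C_\Gamma(\sigma)$ fixes $\alpha_i$, i.e.\ $C_\Gamma(\sigma) \subseteq \gal(L/\Q(\alpha_i)) = \Gamma_i$. Hence $\sigma \in S_2$, proving \eqref{eqsett}.

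For the reverse direction — constructing $g \in A(L)$ achieving equality — I would build $g$ conjugacy-class by conjugacy-class. For each conjugacy class $\mathcal{C}$ in $\Gamma$, pick a representative $\sigma_0 \in \mathcal{C}$. If $C_\Gamma(\sigma_0) \subseteq \Gamma_i$ for some $i$, set $g(\sigma_0) = \alpha_i$; then propagate the value to the rest of the class by $g(\tau\sigma_0\tau^{-1}) := \tau(\alpha_i)$. This is well-defined precisely because $C_\Gamma(\sigma_0)$ stabilizes $\alpha_i$: if $\tau_1\sigma_0\tau_1^{-1} = \tau_2\sigma_0\tau_2^{-1}$ then $\tau_2^{-1}\tau_1 \in C_\Gamma(\sigma_0) \subseteq \Gamma_i$, so $\tau_2^{-1}\tau_1$ fixes $\alpha_i$ and $\tau_1(\alpha_i) = \tau_2(\alpha_i)$. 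If instead $C_\Gamma(\sigma_0)$ is contained in no $\Gamma_i$, I must choose the value $g(\sigma_0)$ so that $f(g(\sigma_0)) \neq 0$ while still respecting \eqref{eqg} over the class. Here one takes $g(\sigma_0)$ to be any element of $L^{C_\Gamma(\sigma_0)} = \Q(\mathcal{C}\text{-fixed field})$ — equivalently the fixed field of the subgroup generated by $C_\Gamma(\sigma_0)$ — that is not a root of $f$; such an element exists because a field (the fixed field is a field, being $L^H$ for $H = C_\Gamma(\sigma_0)$) is infinite and $f$ has only finitely many roots, so we can avoid all of them, and then propagate as before. Doing this on every conjugacy class assembles a function $g$ satisfying \eqref{eqg} globally (the relation only constrains behavior within each conjugacy class), hence $g \in A(L)$, and by construction $\{\sigma : f(g(\sigma)) = 0\} = S_2$.

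I expect the main subtlety to be the non-$\Gamma_i$ case in the construction: one needs the value $g(\sigma_0)$ to lie in the fixed field $L^{C_\Gamma(\sigma_0)}$ (so that the class-propagation is consistent, by the same well-definedness argument as above) \emph{and} to avoid the finitely many roots of $f$. The key observation making this possible is simply that $L^{C_\Gamma(\sigma_0)}$ is an infinite field while $\{\alpha_1,\ldots,\alpha_n\}$ is finite, so a valid choice always exists — but one should double-check that picking values independently on distinct conjugacy classes never creates a conflict, which holds because \eqref{eqg} imposes no relations \emph{between} classes. A minor point worth stating carefully is that in the first case the value $\alpha_i$ indeed lies in $L^{C_\Gamma(\sigma_0)}$ (immediate from $C_\Gamma(\sigma_0) \subseteq \Gamma_i$), so the two cases are handled uniformly: in both, $g(\sigma_0)$ is chosen inside $L^{C_\Gamma(\sigma_0)}$, either equal to a root of $f$ (when forced, i.e.\ possible) or equal to a non-root (when a non-root exists in that fixed field).
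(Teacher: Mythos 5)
Your proof is correct and follows essentially the same route as the paper: the containment via the observation that $g(\sigma)$ is fixed by $C_\Gamma(\sigma)$, and the equality via a conjugacy-class-by-class construction $g(\tau\sigma_j\tau^{-1})=\tau(\alpha_i)$. The only (cosmetic) difference is in the classes where no $\Gamma_i$ contains the centralizer: the paper simply sets $g=0$ there, while you pick a non-root of $f$ in $L^{C_\Gamma(\sigma_0)}$ — your choice is slightly more careful about the degenerate case $f(0)=0$, though in that case $0=\alpha_i$ forces $\Gamma_i=\Gamma$ and the branch never occurs anyway.
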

\begin{proof}
If $f(g(\sigma))=0$, then $g(\sigma)=\alpha_i$ for some $i$. By \eqref{eqg}, $g(\sigma)$ is fixed by $C_\Gamma(\sigma)$, so we must have $C_\Gamma(\sigma)\subset \gal(L/\Q(\alpha_i))=H_i$. This proves the containment \eqref{eqsett}. To show that we can choose $g\in A(L)$ for which \eqref{eqsett} is equality, let $\sigma_1,\ldots,\sigma_k\in \Gamma$ be a system of conjugacy class representatives. For $1\leq j\leq k$, if there does not exist $i$ for which $C_\Gamma(\sigma_j)\subset H_i$, then define $g$ to be $0$ on the conjugacy class of $\sigma_j$. If there does exist $i$, the define $g$ on the conjugacy class of $\sigma_j$ by
\[
g(\tau\sigma_j\tau^{-1})=\tau(\alpha_i).
\]
We have $g\in A(L)$ and $\{\sigma:f(g(\sigma))=0\}=S_2$.
\end{proof}

We also need a group-theoretic fact about wreath products.
\begin{lemma}
\label{lemgroup}
Let $\Gamma$ and $A$ be finite groups, with $A$ abelian, and consider the wreath product
\[
\Gamma':=A^\Gamma\rtimes\Gamma.
\]
Let $\pi:\Gamma'\to\Gamma$ be the projection. Then at least
\[
\lp1-\frac{|\Gamma|^2}{|A|}\rp\big|\Gamma'\big|
\]
elements $\xi\in\Gamma'$ satisfy
\begin{equation}
\label{eqx}
\pi\big(C_{\Gamma'}(\xi)\big)\subset\langle\pi(\xi)\rangle.
\end{equation}
\end{lemma}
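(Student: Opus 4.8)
The plan is to make everything explicit in the wreath product. Write elements of $\Gamma'=A^\Gamma\rtimes\Gamma$ as pairs $(a,\gamma)$ with $a\in A^\Gamma$ and $\gamma\in\Gamma$, where I regard $A^\Gamma$ as the additive group of functions $\Gamma\to A$ (legitimate since $A$ is abelian) with $\Gamma$ acting by $(\sigma\cdot a)(x)=a(\sigma^{-1}x)$, so that $(a,\gamma)(b,\delta)=(a+\gamma\cdot b,\,\gamma\delta)$ and $\pi(a,\gamma)=\gamma$. A one-line computation shows that $(b,\delta)$ centralizes $\xi=(a,\gamma)$ exactly when $\delta\gamma=\gamma\delta$ and $(\delta-1)a=(\gamma-1)b$, the operators $\delta-1$ and $\gamma-1$ acting through $\Z[\Gamma]$. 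Hence
\[
\pi\big(C_{\Gamma'}(\xi)\big)=\big\{\delta\in C_\Gamma(\gamma):(\delta-1)a\in(\gamma-1)A^\Gamma\big\}.
\]
Because $\gamma^k-1=(\gamma-1)(1+\gamma+\cdots+\gamma^{k-1})$, this set always contains $\langle\gamma\rangle$; so \eqref{eqx} holds for $\xi=(a,\gamma)$ unless there is some $\delta\in C_\Gamma(\gamma)\setminus\langle\gamma\rangle$ with $(\delta-1)a\in(\gamma-1)A^\Gamma$. Call such $\xi$ \emph{bad}; I will bound the number of bad elements.

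The heart of the matter is a counting bound for a single pair $(\gamma,\delta)$ with $\delta\in C_\Gamma(\gamma)\setminus\langle\gamma\rangle$: I claim at most $|A^\Gamma|/|A|$ functions $a$ satisfy $(\delta-1)a\in(\gamma-1)A^\Gamma$. This rests on a telescoping identity: for any $b\in A^\Gamma$, writing $m=\mathrm{ord}(\gamma)$,
\[
\sum_{k=0}^{m-1}\big((\gamma-1)b\big)(\gamma^k)=\sum_{k=0}^{m-1}\big(b(\gamma^{k-1})-b(\gamma^k)\big)=0,
\]
so every element of $(\gamma-1)A^\Gamma$ has vanishing sum over $\langle\gamma\rangle=\{1,\gamma,\dots,\gamma^{m-1}\}$. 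Applying this to $(\delta-1)a$ yields $\sum_{y\in\delta^{-1}\langle\gamma\rangle}a(y)=\sum_{y\in\langle\gamma\rangle}a(y)$. Since $\delta\notin\langle\gamma\rangle$, the two sets $\delta^{-1}\langle\gamma\rangle$ and $\langle\gamma\rangle$ are distinct left cosets of $\langle\gamma\rangle$, hence disjoint, so $a\mapsto\sum_{\delta^{-1}\langle\gamma\rangle}a-\sum_{\langle\gamma\rangle}a$ is a surjective homomorphism $A^\Gamma\to A$; thus the set of $a$ in question lies in a kernel of size $|A^\Gamma|/|A|$, proving the claim.

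A union bound now closes the argument: the number of bad $\xi$ is at most
\[
\sum_{\gamma\in\Gamma}\ \sum_{\delta\in C_\Gamma(\gamma)\setminus\langle\gamma\rangle}\frac{|A^\Gamma|}{|A|}\ \le\ \sum_{\gamma\in\Gamma}|C_\Gamma(\gamma)|\cdot\frac{|A^\Gamma|}{|A|}\ \le\ |\Gamma|^2\cdot\frac{|A^\Gamma|}{|A|}\ =\ \frac{|\Gamma|^2}{|A|}\cdot\frac{|\Gamma'|}{|\Gamma|},
\]
which is at most $\tfrac{|\Gamma|^2}{|A|}|\Gamma'|$ (indeed $\tfrac{|\Gamma|}{|A|}|\Gamma'|$); subtracting from $|\Gamma'|$ gives the bound in the statement. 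I do not foresee a real obstacle here. The only genuine content is the observation that one ``bad pair'' $(\gamma,\delta)$ rules out merely a $1/|A|$ proportion of the $a$'s — established by the telescoping identity — and the points needing a little care are keeping the group-action convention consistent in the centralizer calculation and noting that the telescoping sum vanishes for \emph{every} element of $(\gamma-1)A^\Gamma$, precisely because it runs over a full $\langle\gamma\rangle$-orbit.
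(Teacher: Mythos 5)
Your proof is correct and follows essentially the same route as the paper's: identify the commutation condition in the wreath product as a difference-operator equation, observe that membership of $(\delta-1)a$ in the image of $\gamma-1$ forces a telescoping sum of values of $a$ over two disjoint cosets of $\langle\gamma\rangle$ to vanish, and conclude via the surjective homomorphism $A^\Gamma\to A$ and a union bound over pairs $(\gamma,\delta)$. The only differences from the paper are cosmetic (left versus right action conventions, and your use of just the necessary vanishing-sum condition rather than the full characterization of the image of $\gamma-1$).
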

\begin{proof}
We identify elements of $\Gamma'$ with pairs $(\varphi,\sigma)$, where $\varphi:\Gamma\to A$ and $\sigma\in \Gamma$. Under this identification, multiplication in $\Gamma'$ is given by
\[
(\varphi,\sigma)\circ(\psi,\tau)=\big(\varphi+\psi\circ R_{\sigma},\sigma\tau\big)
\]
(here $R_{\sigma}:\Gamma\to \Gamma$ is right multiplication by $\sigma$). A direct computation shows that $(\varphi,\sigma)$, $(\psi,\tau)\in \Gamma'$ commute if and only if $\sigma$ and $\tau$ commute and
\begin{equation}
\label{eqcom}
\varphi-\varphi \circ R_\tau=\psi-\psi\circ R_\sigma.
\end{equation}
For $\eta:\Gamma\to A$, there exists $\psi:\Gamma\to A$ with $\eta=\psi-\psi\circ R_\sigma$ if and only if
\begin{equation}
\label{eqsumo}
\sum_{n=0}^{ord(\sigma)-1} \eta \circ R_{\sigma^n}=0.
\end{equation}
Combining \eqref{eqcom} and \eqref{eqsumo}, we see that, if $\varphi$, $\sigma$, and $\tau$ are fixed, then there exists $\psi$ such that $(\varphi,g)$ and $(\psi,h)$ commute if and only if
\begin{equation}
\label{eqEcom}
\sum_{n=0}^{ord(\sigma)-1}\big( \varphi\circ R_{\sigma^n\tau}-\varphi \circ R_{\sigma^n}\big)=0.
\end{equation}

For each $\sigma$, $\tau\in\Gamma$, define a group homomorphism
\begin{align*}
\chi_{\sigma,\tau}:A^\Gamma&\to A,\\
\varphi&\mapsto\sum_{n=0}^{ord(\sigma)-1}\big( \varphi(\sigma^n\tau)-\varphi (\sigma^n)\big).
\end{align*}
If $\tau\not\in\langle \sigma\rangle$, then the elements $\sigma^n$ and $\sigma^n\tau$ are all distinct. In this case $\chi_{\sigma,\tau}$ is seen to be surjective, and the kernel of $\chi_{\sigma,\tau}$ has index $|A|$ in $A^\Gamma$. It follows from \eqref{eqEcom} that, for fixed $\tau$, $\sigma$ with $\tau\not\in\langle\sigma\rangle$, there are at most $|A|^{|\Gamma|-1}$ functions $\varphi :\Gamma\to A$ for which $\tau\in\pi(C_{\Gamma'}((\sigma,\varphi))$. Taking the union over all $\sigma$, $\tau\in\Gamma$ with $\tau\not\in\langle\sigma\rangle$, we find that the number of elements $\xi\in \Gamma'$ for which \eqref{eqx} does \emph{not} hold is at most
\[
|\Gamma|^2|A|^{|\Gamma|-1}.
\]
This completes the proof.
\end{proof}

We are now ready to prove Theorem \ref{th3}.
\begin{proof}[Proof of Theorem \ref{th3}]
Suppose $f(x)\in\Q[x]$. It is obvious that
\begin{equation}
\label{eqIN}
\delta\bigg(\big\{p:f(a_p)\equiv 0\!\!\mod p\big\}\bigg) \leq \delta\bigg(\big\{p:f\text{ has a root mod } p\big\}\bigg)
\end{equation}
for all $(a_p)_p\in\P$. We need to show that the inequality \eqref{eqIN} is strict if $f(x)$ has no rational roots, and that we can choose $(a_p)_p\in\P$ to make \eqref{eqIN} arbitrarily close to an equality.

Suppose $f(x)$ has no rational roots. For $(a_p)_p\in\P$, let $L/\Q$ and $g\in A(L)$ be such that $a_p\equiv g(\phi_p)\mod p$, and let $\Gamma\supset S_1\supset S_2$ be as in the statements of Lemmas \ref{lem1} and \ref{lem2}. By the Chebotarev density theorem,
\begin{equation*}
\label{eqd1}
 \delta\lp\big\{p:f\text{ has a root modulo } p\big\}\rp=\frac{\#S_1}{\#\Gamma},
\end{equation*}
\begin{equation*}
\label{eqd2}
\max_{g\in A(L)}\delta\lp\big\{p:f(g(\phi_p))\equiv 0\mod p\big\}\rp =\frac{\#S_2}{\#\Gamma}.
\end{equation*}
We get strictness of \eqref{eqIN} because the identity element of $\Gamma$ is in $S_1$ but not in $S_2$.

To show that \eqref{eqIN} is sharp, we pass from $L$ to an extension $L'/L$ with the property that in $\gal(L'/\Q)$, most elements have small centralizers (in a sense to be made precise). For $L'/\Q$ a finite Galois extension containing $L$, write $\Gamma'=\gal(L'/\Q)$ and $\pi:\Gamma'\surj\Gamma$ for the restriction map. Let
\[
\Gamma_i'=\pi^{-1}(\Gamma_i)=\gal(L'/\Q(\alpha_i))\subset \Gamma',
\]
for $i=1,\ldots,n$. If an element $\sigma\in \Gamma'_i$ satisfies
\begin{equation}
\label{eqxx}
\pi\big(C_{\Gamma'}(\sigma)\big)\subset\langle\pi(\sigma)\rangle,
\end{equation}
then $C_{\Gamma'}(\sigma)\subset\Gamma_i'$. We will show that for every $\epsilon>0$, we can choose the $L'/L$ so that \eqref{eqxx} holds for at least $(1-\epsilon)|\Gamma'|$ elements $\sigma$ of $\Gamma'$. This will prove the theorem.

Let $\epsilon>0$ be given, and choose a positive integer $r$ such that 
\[
\frac{|\Gamma|^2}{2^{r}}<\epsilon.
\]
Let $p_1,\ldots,p_r$ be distinct rational primes that split completely in the Hilbert class field of $L$. For each $i$, let $\beta_i\in \cO_L$ be a generator for a (necessarily degree $1$ and principal) prime of $L$ over $p_i$. Let $L'$ be the extension of $L$ obtained by adjoining a square root of $\sigma(\beta_i)$ for all $\sigma\in \Gamma$ and $1\leq i\leq r$. Then $\Gamma'$ is isomorphic to a wreath product
\[
\Gamma'\cong A^\Gamma\rtimes \Gamma,
\]
with $A= (\Z/2)^r$. The result now follows from Lemma \ref{lemgroup}.
\end{proof}

%%%%%%%%%%%%
%
%		Periods
%
%%%%%%%%%%%%
\section{Periods}
\label{secfrob}
In this section we prove the equivalence of conditions (1) and (3) of Theorem \ref{th1}. We also explain why $\cP_{\!\A}^0$ is analogous to $\Qb$, and we explain how to obtain other analogues of periods inside $\A$.

\subsection{Dimension $0$}
Let $L/\Q$ be a finite Galois extension, with ring of integers $\cO_L$. There is an isomorphism of $\A$-algebras
\[
L\otimes_\Q\A\cong\frac{\prod_p\cO_L/p\cO_L}{\bigoplus_p\cO_L/p\cO_L}.
\]
For each rational prime $p$, the $p$-th power map is a $\Fp$-algebra endomorphism $F_{p,L}$ of $\cO_L/p\cO_L$, which is an automorphism if $p$ is unramified in $L$. 

\begin{definition}
\label{deffa}
The \emph{$\A$-valued Frobenius automorphism} is the $\A$-algebra automorphism  $F_{\A,L}$ of $L\otimes_\Q\A$ induced by $F_{p,L}$ in the $p$-th factor.
\end{definition}
If we choose a basis for $L$ as a $\Q$-vector space, we can represent $F_{\A,L}$ by a square matrix with entries in $\A$, and the $\Q$-span of the matrix entries does not depend on the choice of basis.

\begin{theorem}
\label{thmat}
For each finite Galois extension $L/\Q$, the $\Q$-span of the matrix entries for $F_{\A,L}$ is equal to the set of elements $(g(\phi_p)\mod p)_p\in\A$ for $g\in A(L)$.
\end{theorem}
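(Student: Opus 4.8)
The plan is to identify both sides of the claimed equality with the image of $(L\otimes_\Q L)^\Gamma$ under a single explicit map, reusing the isomorphism $\varphi\colon L\otimes_\Q L\iso\hom(\Gamma,L)$ from the proof of Theorem \ref{thseqA}, where $\Gamma=\gal(L/\Q)$. Recall that the proof of Theorem \ref{thseqA} already shows that the set $\{(g(\phi_p)\bmod p)_p : g\in A(L)\}$ equals $\{(\sum_i b_i\lambda_i^p \bmod p)_p : \sum_i b_i\otimes\lambda_i\in(L\otimes_\Q L)^\Gamma\}$, since $\varphi$ restricts to an isomorphism $(L\otimes_\Q L)^\Gamma\iso A(L)$ and $\varphi(b\otimes\lambda)(\phi_{\mathfrak P})=b\,\phi_{\mathfrak P}(\lambda)\equiv b\lambda^p\bmod\mathfrak P$. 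So it suffices to show the $\Q$-span of the matrix entries of $F_{\A,L}$ coincides with this latter set.

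First I would unwind the definition of $F_{\A,L}$ concretely. Fix a $\Q$-basis $e_1,\dots,e_k$ of $L$, which is also an $\A$-basis of $L\otimes_\Q\A$. For all but finitely many $p$ the prime is unramified, $\cO_L/p\cO_L\cong\prod_{\mathfrak P\mid p}\cO_L/\mathfrak P$, and the $p$-power map $F_{p,L}$ on $\cO_L/p\cO_L$ is the automorphism acting as the Frobenius $\phi_{\mathfrak P}$ on each factor $\cO_L/\mathfrak P$. Writing the matrix of $F_{\A,L}$ in the basis $(e_j)$ as $(c_{ij})_{ij}$ with $c_{ij}\in\A$, the $p$-th component of $c_{ij}$ is determined by $e_j^{\,p}\equiv\sum_i (c_{ij})_p\, e_i \pmod{p\cO_L}$, i.e. by expanding the $p$-power of each basis vector back in the basis modulo $p$. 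The key computation is then: for any $\lambda\in L$ and any $b\in L$, the element $(b\lambda^p\bmod p)_p\in L\otimes_\Q\A$ — which a priori lies in $L\otimes\A$, not $\A$ — is a $\Q$-linear (indeed $L$-linear, but we only need $\Q$-linear after taking $\Gamma$-invariant combinations) combination of the entries $c_{ij}$, because $\lambda^p\bmod p$ is the image of $\lambda$ under $F_{p,L}$ and hence its coordinate vector in $(e_i)$ has entries that are $\Q$-combinations of the $(c_{ij})_p$. Conversely each $c_{ij}$ itself has $p$-th component $(e_j^p\bmod p)$'s $i$-th coordinate, so the $c_{ij}$ are among the elements of the form "coordinates of $\lambda^p$".

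The heart of the argument is matching the two spans after imposing $\Gamma$-invariance, and I would do it via the trace/restriction-of-scalars picture. The point is that $F_{\A,L}$ as an $\A$-linear endomorphism of $L\otimes_\Q\A$ "is" the element $\sum_{\mathfrak P\mid p}(\text{Frobenius }\phi_{\mathfrak P})$ packaged across all $p$; decomposing $L\otimes_\Q L=\prod_{\sigma\in\Gamma}L$ via $x\otimes y\mapsto(x\sigma(y))_\sigma$ (this is $\varphi$), the action of raising to the $p$-th power corresponds to cycling coordinates by the Frobenius, and the $\Q$-span of matrix entries of $F_{\A,L}$ is exactly the image in $\A$ of the diagonal $\Gamma$-invariants, which is $\per_{\!\A}(A(L))$. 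Concretely: an arbitrary entry $c_{ij}$, when we write $e_j=\sum_\sigma$-components, becomes $(g_{ij}(\phi_p)\bmod p)_p$ for an explicit $g_{ij}\in\hom(\Gamma,L)$, and the condition that $c_{ij}\in\A$ (i.e. lands in the $\Q$-subalgebra, not just $L\otimes\A$) forces the relevant combination to lie in $A(L)$; conversely every $g\in A(L)$ arises from some $\Q$-combination of entries because $A(L)$ spans $\hom(\Gamma,L)$ over $L$ and the entries $c_{ij}$ already generate enough.

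I expect the main obstacle to be bookkeeping the distinction between $L$-linear and $\Q$-linear spans cleanly: the matrix entries of $F_{\A,L}$ live in $\A$, but they are built from an $L\otimes\A$-linear map, so one must check that taking the $\Q$-span of the entries (rather than, say, the $L$-span of images $\lambda^p\bmod p$) lands exactly on $\per_{\!\A}(A(L))$ and not something smaller or larger. The containment "$\Q$-span of entries $\subseteq \per_{\!\A}(A(L))$" should be immediate from the $\sigma$-coordinate description above, since each entry is visibly $(g(\phi_p)\bmod p)$ for some $g\in\hom(\Gamma,L)$ that actually lies in $A(L)$ because the entry is $\Gamma_{\mathbb Q}$-rational, i.e. fixed, forcing \eqref{eqg}. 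For the reverse containment I would argue that $\{\varphi(b\otimes\lambda)\}$ as $\lambda$ ranges over a $\Q$-basis of $L$ and $b$ over its dual basis already realizes, via $\Gamma$-averaging, a $\Q$-spanning set of $A(L)$, and each such $\varphi(b\otimes\lambda)$ composed with $\per_{\!\A}$ is a $\Q$-combination of the $c_{ij}$ by the change-of-basis computation; then invoke Theorem \ref{thseqA} (and its proof) to conclude both sides equal the recurrent-sequence description. The only genuinely delicate point is verifying that the $\Q$-span — as opposed to a larger $L$-span — of the $c_{ij}$ is closed enough to contain all of $\per_{\!\A}(A(L))$, which comes down to the fact that $A(L)$, though not an $L$-vector space, has $\Q$-dimension equal to $\dim_L\hom(\Gamma,L)=|\Gamma|=\dim_\Q L=k$, matching the number of "independent" matrix entries one gets after accounting for the $\Gamma$-symmetry.
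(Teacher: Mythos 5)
Your proposal follows essentially the same route as the paper's proof: the matrix entries are rewritten via the trace-dual basis as $\big(\mathrm{Tr}_{L/\Q}(x_i e_j^{\,p})\bmod p\big)_p$, both sides are identified with the image of $(L\otimes_\Q L)^\Gamma$ under the evaluation map from the proof of Theorem \ref{thseqA}, and the reverse containment comes from surjectivity of the averaging map $x\otimes y\mapsto\sum_{\sigma}\sigma(x)\otimes\sigma(y)$ onto the $\Gamma$-invariants. The one phrase to tighten is the claim that ``$c_{ij}\in\A$ forces the relevant combination to lie in $A(L)$'': the correct mechanism, which you in fact already have via the trace form, is that the preimage $\sum_{\sigma}\sigma(x_i)\otimes\sigma(e_j)$ is $\Gamma$-invariant by construction, not that rationality of the entry is what imposes \eqref{eqg}.
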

\begin{proof}
The $\Q$-span of matrix coefficients for $F_{\A,L}$ is  the image of the map
\begin{align}
\label{map}L^\du\otimes_\Q L &\to \A,\\
\varphi\otimes y&\mapsto \big(\varphi(y^p)\mod p\big)_p.\nonumber
\end{align}
Here $L^\du$ is the $\Q$-linear dual of $L$. The trace form induces an isomorphism of $L$ with $L^\du$, so the image of \eqref{map} is equal to the image of
\begin{align*}
L\otimes L&\to \A,\\
x\otimes y&\mapsto \lp\Bigg(\sum_{\sigma\in \Gamma}\sigma(xy^p)\Bigg)\mod p\rp_p,
\end{align*}
where $\Gamma=\gal(L/\Q)$.

It follows from the proof of Theorem \ref{thseqA} that $\{(g(\phi_p)\mod p)_p\}$ is equal to the image of the map
\begin{align*}
(L\otimes L)^\Gamma&\to \A,\\
\sum_i x_i\otimes y_i&\mapsto \lp\Bigg(\sum_i x_i y_i^p\Bigg)\mod \mathfrak{P}\rp_p,
\end{align*}
where for each $p$ we have chosen a prime $\mathfrak{P}$ of $L$ over $p$. The result now follows from the fact that
\begin{align*}
L\otimes L&\to (L\otimes L)^\Gamma,\\
x\otimes y&\mapsto\sum_\sigma \sigma(x)\otimes\sigma(y)
\end{align*}
is surjective.
\end{proof}

The algebraic de Rham cohomology of $\spe(L)$ (which we view as a $0$-dimensional algebraic variety over $\Q$) is identified with $L$. Thus $\P$ is the $\Q$-span of the matrix coefficients for the isomorphism
\[
H^0_{dR}(\spe(L))\otimes\A\iso H^0_{dR}(\spe(L))\otimes\A,
\]
for $L$ ranging over the finite Galois extensions of $\Q$. If instead we look at de Rham-Betti comparison isomorphism
\[
H^0_{dR}(\spe(L))\otimes\C\iso H^0_{B}(\spe(L))\otimes\C
\]
for varying $L$, the $\Q$-span of the matrix coefficients is $\Qb$. For this reason $\P\subset\A$ is analogous to $\Qb\subset\C$. By contrast, the integral closure of $\Q$ inside $\A$ is uncountable.

\subsection{Positive dimension}
\label{ssgen}
The characterization of $\P$ as matrix coefficients of the $\A$-valued Frobenius can be generalized to produce elements of $\A$ from varieties of positive dimension. If $X$ is a variety defined over $\Q$ and $i\geq 0$ is an integer, the algebraic de Rham cohomology $H_{dR}^i(X)$ is finite-dimensional vector space over $\Q$, and for all sufficiently large $p$ there is a distinguished automorphism
\begin{equation}
\label{eqdefFp}
F_{p,X}:H_{dR}^i(X)\otimes\Q_p\iso H_{dR}^i(X)\otimes\Q_p
\end{equation}
coming from crystalline cohomology (see \cite{Ked09}). Matrix coefficients for $F_{p,X}$ with respect to a $\Q$-basis are (one type of) $p$-adic periods of $X$. Each matrix coefficient for $F_{p,X}$ is $p$-integral for all sufficiently large $p$, so reduction modulo $p$ (for all large $p$ at once) gives an element of $\A$. These elements are called $\A$-valued periods in \cite{Ros18a}. It is convenient to assemble the maps $F_{p,X}$ to form an $\A$-valued Frobenius map
\[
F_{\A,X}:H_{dR}^i(X)\otimes\A\to H_{dR}^i(X)\otimes\A,
\]
whose matrix coefficients are $\A$-valued periods (the map $F_{\A,X}$ is no longer an isomorphism).
Details can be found in \cite{Ros18a}, \S6.

If we instead use the de Rham-Betti comparison isomorphism
\[
comp_X:H^i_{dR}(X)\otimes\C\iso H^i_{B}(X)\otimes\C,
\]
matrix coefficients are the ordinary (complex) periods of $X$. So in this analogy $\A$ corresponds to $\C$, and $F_{\A,X}$ corresponds to $comp_X$. Define $\cP_\A\subset\A$ (resp.\ $\cP_\C\subset\C$) to be the $\Q$-span of the matrix coefficients for $F_{\A,X}$ (resp.\ $comp_X$), as $X$ ranges through all varieties over $\Q$. By taking $X$ to have dimension $0$ we see that $\P\subset\cP_\A$ and $\Qb\subset\cP_\C$.

The period conjecture of Grothendieck (see \cite{And04}, \S7.5) would imply that there is a $\Q$-algebra homomorphism
\[
\Delta:\cP_\C\to\cP_\C\otimes_\Q\cP_\A.
\]
Concretely, fix a variety $X$ and bases for $H^i_{dR}(X)$ and $H^i_{dR}(X)$, say of length $n$. Write $F_{\A,X}$ and $comp_X$ as matrices $(\alpha_{i,j})\in M_n(\A)$ and $(\beta_{i,j})\in M_n(\C)$, respectively. The map $\Delta$ is then given by
\begin{equation}
\label{eqDelta}
\Delta(\beta_{i,j}) = \sum_{k=1}^n \beta_{i,k}\otimes\alpha_{k,j}\in\cP_\C\otimes_\Q\cP_\A.
\end{equation}
A priori the right hand side of \eqref{eqDelta} might depend on $X$, $i$, and $j$, but the the period conjecture implies that in fact the right hand side depends only on the value $\beta_{i,j}\in\cP_\C$.

Every algebraic number occurs as a matrix coefficient for $comp_X$ for some $0$-dimensional $X$. Since the $\A$-valued periods of this $X$ are in $\P$, this implies $\Delta$ takes $\Qb\subset\cP_\C$ into $\P\otimes_\Q\cP_\C$. So the truth of the period conjecture would imply that if we see an algebraic number as a complex period of an arbitrary variety, we will also see elements of $\P$ in the $\A$-valued periods of that variety.

\ack We thank Jeffrey Lagarias for helpful comments. We thank the anonymous referee for helpful suggestions on the structure of the paper.

\bibliographystyle{hplain}
\newcommand{\noop}[1]{} \def\cprime{$'$}

\end{document}